\numberwithin{equation}{section} %sets equation numbers <chapter>.<section>.<index>
\def\C{{\mathbb C}}
\def\D{{\mathbb D}}
\def\P{{\mathbb P}}
\def\R{{\mathbb R}}
\def\*S{{\mathbb S}}
\def\Z{{\mathbb Z}}
\newcommand{\dis}{\displaystyle}
\newcommand{\cal}{\mathcal}
\newtheorem{theorem}{Theorem}
\newtheorem{proposition}{Proposition}[section]
\newtheorem{lemma}[proposition]{Lemma}
\newtheorem{corollary}[proposition]{Corollary}
\begin{document}
\title[Boundary integral formula for harmonic functions]
{Boundary integral formula for harmonic functions on Riemann surfaces.}

\author{Peter L. Polyakov}
\curraddr{}
\email{polyakov@uwyo.edu}

\subjclass[2010]{Primary: 30F, 32A10, 32A26}

\keywords{Riemann surfaces, Harmonic functions}

\begin{abstract}
We construct a boundary integral formula for harmonic functions on
smoothly-bordered subdomains of Riemann surfaces embeddable into $\C\P^2$. The formula
may be considered as an analogue of the Green's formula for domains in $\C$.
\end{abstract}

\maketitle

\section{Introduction.}\label{Introduction}

\indent
Let $\widetilde V$ be a Riemann surface
\begin{equation}\label{ProjectiveCurve}
{\widetilde V}=\left\{z\in \C\P^2:\ P(z)=0\right\}
\end{equation}
defined by the polynomial $P$ of degree $d$, and let
\begin{equation}\label{AffineCurve}
V=\left\{z\in {\widetilde V}: \varrho(z)<0\right\}=\widetilde{V}\setminus\bigcup_{r=1}^m V_r,
\end{equation}
be a subdomain of $\widetilde V$, where $\varrho$ is a smooth function on $\widetilde{V}$,
and $\left\{V_r\right\}_{r=1}^m$, $(m\leq d)$
is a collection of disjoint neighborhoods in $\widetilde V$ of the points at infinity
$$\left\{\zeta^{(1)},\dots,\zeta^{(d)}\right\}=\widetilde{V}\cap \left\{\zeta\in\C\P^2: \zeta_0=0\right\}.$$
By allowing inequality $m<d$ we allow the possibility of some neighborhoods $V_r$
to contain several points of the set ${\widetilde V}\cap \left\{z_0=0\right\}$.\\
\indent
The goal of the present article is the construction of a boundary integral formula
defining the values of a harmonic function $u$ on $V$ through the values of $u$ and of the form $\partial u$
on the boundary $bV$. In a sense the resulting formula may be considered as an analogue of the Green's
formula for a harmonic function $u$ and a solution of $\Delta_y G(x,y)=\delta_x$
in a domain $V\subset \C$
\begin{equation}\label{Green}
u(x)=\int_{bV}\left(u(y)\frac{\partial G(x,y)}{\partial\nu}-G(x,y)\frac{\partial u}{\partial\nu}(y)\right)ds(y).
\end{equation}\\
\indent
To construct the sought formula we use the formula from our earlier article \cite{P} for boundary
representation of holomorphic functions on open Riemann surfaces as in \eqref{AffineCurve}.
The statement of Theorem 1 from \cite{P}, where the holomorphic formula is proved, is included below in section~\ref{proof}.
The formula in \cite{P} is constructed as the residue of the formula on a tubular domain in the unit sphere
$\*S^{5}(1)=\left\{z\in\C^3: |z|=1\right\}$, namely
\begin{equation}\label{Uepsilon}
U^{\epsilon}=\left\{z\in \*S^{5}(1):\ \left|P(z)\right|<\epsilon, \varrho(z)<0\right\}.
\end{equation}
Therefore, its application requires two additional steps: extension of a holomorpic function from $V$
to a domain in $\C\P^2$, and further extension to some $U^{\epsilon}\subset \*S^{5}(1)$. The first
extension is constructed in Lemma~\ref{Extension} below, and the second is achieved, as in \cite{HP1},
by the identification of a function on a domain in $\C\P^2$ with its lift to a domain in $\*S^{5}(1)$
satisfying appropriate homogeneity conditions.\\
\indent
The motivation for the present work, though indirectly, came from the author's joint work with
Gennadi Henkin, who in the last years of his life became interested in an \lq\lq explicit\rq\rq\ solution
of the inverse problem of conductivity on Riemann surfaces, in which the conductivity function has to be reconstructed from the Dirichlet-to-Neumann map on its boundary (see \cite{C}, \cite{Ge}, \cite{HN},
and references therein).

\indent
Before formulating the main result of the article we introduce some additional objects and notations.
As in \cite{HP2} and \cite{P} we consider the Weil-type barrier \cite{WA} defined by
polynomials $\left\{Q^i(\zeta,z)\right\}_{i=0}^2$ satisfying:
\begin{equation}\label{HomogeneityConditions}
\left\{\begin{array}{ll}
P(\zeta)-P(z)=\sum_{i=0}^2Q^i(\zeta,z)\cdot\left(\zeta_i-z_i\right),\vspace{0.1in}\\
Q^i(\lambda\zeta,\lambda z)=\lambda^{d-1}\cdot Q^i(\zeta,z)\ \text{for}\ \lambda\in\C.
\end{array}\right.
\end{equation}
\indent
Another barrier, which was constructed in \cite{P}, is local with respect to $z$ and global
with respect to $\zeta$. It has the form
\begin{equation}\label{BoundaryBarrier}
F(w,\zeta)=\sum_{i=0}^2R_i(w)(\zeta_i-w_i),
\end{equation}
where we assume that for any point
$z\in V$ there exists a neighborhood ${\cal V}_z\ni z$ of $z$ and a
holomorphic vector-function $R(w)=\left(R_0(w),\ R_1(w),\ R_2(w)\right)\neq 0$ on
${\cal V}_z$ such that for $w\in {\cal V}_z$ the set
\begin{equation}\label{RCondition}
\begin{aligned}
&{\cal S}(w)=\left\{\zeta\in V: F(w,\zeta)=0\right\}\
\text{consists of finitely many points}\ \left\{w^{(0)}=w, w^{(1)},\dots, w^{(d-1)}\right\},\\
&\text{at which the line} \left\{\zeta: F(w,\zeta)=0\right\}
\text{transversally intersects}\ V.
\end{aligned}
\end{equation}
\indent
To construct a vector function $R(w)$ satisfying \eqref{RCondition} we use Bertini's
Theorem (see \cite{Ha}) and choose the vector $R(z)$ so that it satisfies condition
\eqref{RCondition} at the point $z$. Then we will have this condition satisfied for $w\in {\cal V}_z$
in a small enough ${\cal V}_z$.\\
\indent
For a set of points $\left\{w,w^{(1)},\dots,w^{(d-1)}\right\}$ such that function ${\dis \frac{w_1}{w_0} }$
takes distinct values at those points, we define matrix $A(w)$ as the following Vandermonde
$d\times d$ matrix
\begin{equation}\label{Vandermonde}
A(w)=\left[\begin{tabular}{cccc}
1&1&$\cdots$&1\vspace{0.05in}\\
${\dis \frac{w_1}{w_0} }$&${\dis \frac{w^{(1)}_1}{w^{(1)}_0} }$&$\cdots$&
${\dis \frac{w^{(d-1)}_1}{w^{(d-1)}_0} }$\vspace{0.05in}\\
$\vdots$&$\vdots$&&$\vdots$\vspace{0.05in}\\
${\dis \left(\frac{w_1}{w_0}\right)^{d-1} }$&
${\dis \left(\frac{w^{(1)}_1}{w^{(1)}_0}\right)^{d-1} }$&$\cdots$&
${\dis \left(\frac{w^{(d-1)}_1}{w^{(d-1)}_0}\right)^{d-1} }$
\end{tabular}\right].
\end{equation}
For a holomorphic function $g$ on $V$ we denote by $A_k[g](w,w^{(1)},\dots,w^{(d-1)})$
the matrix $A(w)$ with the $k$-th column replaced by the column
\begin{equation*}
\left[\begin{tabular}{c}
$G_0(w,w^{(1)},\dots,w^{(d-1)})$\\
$\vdots$\\
$G_{d-1}(w,w^{(1)},\dots,w^{(d-1)})$
\end{tabular}\right],
\end{equation*}
where
\begin{multline}\label{GValues}
G_k(w,w^{(1)},\dots,w^{(d-1)})
=\frac{2}{(2\pi i)^{3}}\Bigg(\sum_{j=0}^{d-1}
\lim_{\epsilon\to 0}\int_{\Gamma^{\epsilon}}
g(\zeta)\cdot\left(\frac{\zeta_1}{\zeta_0}\right)^k\\
\times\det\left[\frac{Q(\zeta,w^{(j)})}{P(\zeta)}\
\frac{R(w^{(j)})}{F(w^{(j)},\zeta)}\ \frac{\bar\zeta}{B(\zeta,w^{(j)})}\right]
d\zeta_0\wedge d\zeta_1\wedge d\zeta_2\Bigg),
\end{multline}
\begin{equation}\label{Gamma}
\Gamma^{\epsilon}=\left\{z\in \*S^{5}(1):\ \left|P(z)\right|=\epsilon, \varrho(z)=0\right\},
\end{equation}
$$B(\zeta,z)=\sum_{j=0}^2{\bar\zeta}_j\cdot\left(\zeta_j-z_j\right),$$
and barriers $P(\zeta)-P(z)$ and $F(w,\zeta)$ are defined in 
\eqref{HomogeneityConditions} and \eqref{BoundaryBarrier} respectively.

Below we formulate the main theorem of this article.

%******************************************************************
%******* Theorem Main  *********************************************
\begin{theorem}\label{Main} Let $V\subset \left\{\C\P^2\setminus\left\{z_0=0\right\}\right\}$
be as in \eqref{AffineCurve}, and let $u$ be a harmonic function on $V$.\\
\indent
Then there exist an $\epsilon>0$ and a holomorphic function $g$ on $U^{\epsilon}$, constructed
in \eqref{HolomorphicFunction} and Lemma~\ref{Extension} as an extension of a holomorphic function
on $V$ with the real part based on modification of $u$, and such that for an arbitrary $z\in V$,
neighborhood ${\cal V}_z$ of $z$ in $V$ satisfying conditions:
\begin{equation}\label{TwoConditions}
\left\{\begin{aligned}
&(i)\ \text{condition}\ \eqref{RCondition}\ \text{holds},\\
&(ii)\ \text{the function}\ {\dis \frac{w_1}{w_0} }\ \text{takes distinct values at the points}
\left\{w,w^{(1)},\dots,w^{(d-1)}\right\}\ \text{of}\ {\cal S}(w)\ \text{for}\ w\in {\cal V}_z,
\end{aligned}\right.
\end{equation}
and arbitrary $w\in {\cal V}_z$ the following equalities hold for the values of $u$ at the points
of ${\cal S}(w)$:
\begin{equation}\label{uValues}
u(w^{(k)})=\frac{1}{d+1}\cdot\mbox{Re}
\left\{\frac{w^{(k)}_0\cdot\det A_k[g](w,w^{(1)},\dots,w^{(d-1)})}{\det A(w)}\right\}
+\sum_{r=1}^{m-1}a_rh_r(w),
\end{equation}
where $\left\{h_r\right\}_1^{m-1}$ are fixed harmonic functions with log-type singularities at selected fixed
points in $\widetilde{V}\setminus V$, and the coefficients depending on the form $\partial u\Big|_{bV}$
are constructed in Proposition~\ref{ZeroPeriods-1}.
\end{theorem}
%******************************************************************
%******* End Theorem Main  ******************************************

{\bf Remark 1.} The proof of Theorem~\ref{Main} is based on the application of the boundary value
formula for holomorphic functions on Riemann surfaces, constructed in \cite{P}. A connection with this
formula is established in Proposition~\ref{Periods} below. This proposition shows, in particular,
that on an open subset $V$ of a Riemann surface with finitely many boundary components
$bV=\cup_{r=1}^m \sigma_r$ every harmonic function $u\in C^{\infty}(\overline{V})$ is the real part
of a holomorphic function up to a finite-dimensional subspace, the result first obtained in \cite{WJ}.
Explicit construction in Proposition~\ref{Periods} shows in addition that the
codimension of the subspace of real parts of holomorphic functions in the space of all harmonic functions
on $V$ is equal to $m-1$, and depends only on the number of boundary components.

\indent
{\bf Remark 2.} The problem of existence of a holomorphic function with a given harmonic function as
its real part on a multiply connected domain in $\C$ or on a subdomain of a Riemann surface was addressed
earlier in \cite{KD, KS1,KS2} (see also relevant bibliography in those articles). In the case of a subdomain
of $\C$ the problem is easier, because the second set of equalities in \eqref{ZeroIntegrals}
is absent and Proposition~\ref{ZeroPeriods-2} below is not needed. In the articles above the construction
of the sought holomorphic function is based on the application of the Green kernel of $V$. Construction of
the Green kernel of an open Riemann surface is a delicate problem that to the best of author's knowledge
has no explicit solution, namely a solution, depending only on: 1) the defining equations of $\widetilde{V}$
from \eqref{GeneralProjectiveCurve}, and 2) equations defining contours
$\left\{\sigma_r\right\}_{j=1}^{m}$, or zero set of the function $\varrho$ from \eqref{GeneralAffineCurve}. 
Such problem is very close to the problem that we are solving in the present article, namely of finding
a boundary integral formula for harmonic functions on $V$ from \eqref{ProjectiveCurve}
and \eqref{AffineCurve}. Our proof of Proposition~\ref{Periods}, presented below, which relies on
Fourier analysis of functions on the fundamental region of $\widetilde{V}$ in $\C$ or in the unit disk,
allows us to construct an explicit (in the above mentioned sense) form - \eqref{uValues} - of an analogue
of the Green's identity on $V$.

{\bf Acknowledgments.} The author would like to thank Dima Khavinson for reading the manuscript and for
bringing the author's attention to articles \cite{KD, KS1,KS2}, where the problem of multivaluedness
of holomorphic functions with fixed real part on multiply connected domains and on Riemann surfaces
was addressed. The author also would like to thank the referee for suggestions improving the exposition
of results of the article.

\section{Modification of the original function.}
\label{uModification}

\indent
To prove the boundary representation formula \eqref{uValues} for a harmonic function $u$ on $V$ we will
use the real part of the Cauchy-type formula for holomorphic functions, which was constructed in \cite{P}.
However, not every harmonic function on $V$ is a real part of a holomorphic function
(see for example \cite{Fr}).
Therefore, we have to modify $u$ in such a way that the new harmonic function will be a real part
of a holomorphic function on $V$.
In the lemma below we give a necessary and sufficient condition for a harmonic function
on $V$ to be a real part of a holomorphic function.

%******************************************************************
%******* Lemma RealPart  *********************************************
\begin{lemma}\label{RealPart}
Let $u$ be a real-valued harmonic function on $V$. Then a holomorphic function
$f$ on $V$ admits the representation $f=u+iv$ with real-valued $v$, iff it satisfies
\begin{equation}\label{df}
df=2\partial u.
\end{equation}
\end{lemma}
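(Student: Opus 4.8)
The plan is to prove both directions of the equivalence by relating the differential $df$ of a holomorphic function to its real part. First I would recall the standard decomposition $df = \partial f + \bar\partial f = \partial f$, valid because $f$ is holomorphic, so the content of \eqref{df} is really the identity $\partial f = 2\partial u$.

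For the forward direction, suppose $f = u + iv$ with $u,v$ real-valued. Writing $u = \tfrac12(f + \bar f)$ on a coordinate chart and applying $\partial$, I get $\partial u = \tfrac12(\partial f + \partial \bar f) = \tfrac12\,\partial f$, since $\partial \bar f = \overline{\bar\partial f} = 0$ by holomorphicity of $f$. Hence $df = \partial f = 2\partial u$, which is \eqref{df}. This direction is essentially a one-line computation and carries no obstruction.

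For the converse, suppose $f$ is holomorphic on $V$ and $df = 2\partial u$. I would set $v := \mathrm{Im}\,f = \tfrac{1}{2i}(f - \bar f)$, so that trivially $f = \mathrm{Re}\,f + iv$ with $v$ real-valued; the real issue is to show $\mathrm{Re}\,f = u$, i.e.\ that $h := \mathrm{Re}\,f - u$ vanishes. Now $h$ is real-valued and harmonic (difference of harmonic functions — note $\mathrm{Re}\,f$ is harmonic since $f$ is holomorphic). Compute $\partial h = \partial(\mathrm{Re}\,f) - \partial u = \tfrac12\,df - \tfrac12\,df = 0$ by the forward computation applied to $f$ together with the hypothesis. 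Since $h$ is real-valued, $\bar\partial h = \overline{\partial h} = 0$ as well, so $dh = \partial h + \bar\partial h = 0$, meaning $h$ is locally constant. If $V$ is connected, $h$ is a real constant $c$, and replacing $f$ by $f - c$ gives a holomorphic function with real part exactly $u$ and still satisfying $d(f-c) = df = 2\partial u$; since the lemma asserts the existence of such a representation, this suffices. If $V$ is not assumed connected one argues componentwise.

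The main thing to be careful about — and the only place that is more than bookkeeping — is the identity $\partial u = \tfrac12\,df$ for $f = u+iv$: it rests on the type decomposition of $1$-forms on the Riemann surface and on $\partial \bar f = 0$, which in turn is exactly the statement that $f$ is holomorphic. Everything else (harmonicity of real and imaginary parts of a holomorphic function, $d = \partial + \bar\partial$ on functions, the passage from $dh=0$ to $h$ locally constant) is standard one-variable complex analysis transported to charts on $V$, and the adjustment of $f$ by an additive real constant is harmless. I do not anticipate a genuine obstacle; the lemma is a clean local computation packaged globally.
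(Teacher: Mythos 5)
Your proof is correct and follows essentially the same route as the paper: the forward direction is the identity $\partial u=\tfrac12\,\partial f$ (which the paper obtains by expanding in $\xi,\eta$ and invoking the Cauchy--Riemann equations, and you obtain more compactly from $u=\tfrac12(f+\bar f)$ and $\partial\bar f=0$), and the converse reduces to $d(\mathrm{Re}\,f-u)=0$. Your handling of the additive real constant in the converse is in fact more explicit than the paper's, which simply states that $u$ is the real part \emph{up to a constant}.
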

%******************************************************************
%*************************************************************
\begin{proof}
We consider the differential form $\partial u$, which in a local coordinate system
of holomorphic coordinate
$\zeta=\xi+i\eta$ has the form
\begin{equation*}
\partial u=\frac{1}{2}(u_{\xi}-iu_{\eta})(d\xi+id\eta).
\end{equation*}
Since $u$ is harmonic, i.e. $\bar\partial\partial u=0$, $\partial u$ is a holomorphic
form on $V$. If there exists a holomorphic function $f=u+iv$, then we have 
\begin{multline*}
df=(\partial+\bar\partial)f=(\partial u+\bar\partial u)+i(\partial v+\bar\partial v)\\
=\frac{1}{2}(u_{\xi}-iu_{\eta})(d\xi+id\eta)+\frac{1}{2}(u_{\xi}+iu_{\eta})(d\xi-id\eta)
+\frac{i}{2}(v_{\xi}-iv_{\eta})(d\xi+id\eta)+\frac{i}{2}(v_{\xi}+iv_{\eta})(d\xi-id\eta)\\
=\frac{1}{2}(u_{\xi}+v_{\eta}+iv_{\xi}-iu_{\eta})(d\xi+id\eta)
+\frac{1}{2}(u_{\xi}-v_{\eta}+iu_{\eta}+iv_{\xi})(d\xi-id\eta)\\
=(u_{\xi}-iu_{\eta})(d\xi+id\eta)=2\partial u,
\end{multline*}
where we used the Cauchy-Riemann equations
\begin{equation*}
\begin{aligned}
&u_{\xi}=v_{\eta},\\
&u_{\eta}=-v_{\xi}.
\end{aligned}
\end{equation*}
\indent
On the other hand, if a function $f$ satisfies \eqref{df}, then $df$ is
a holomorphic differential form, i.e. $f$ is holomorphic and $u$, up to a constant, is its real part.
\end{proof}%******* End Lemma RealPart  ****************************

\indent
In the proposition below
for $V \subset\widetilde{V}$ as in \eqref{AffineCurve}
we modify the given harmonic function $u$ on $V$ so that
the resulting function has zero integrals over the generators of $H^1(V,\Z)$.

%******************************************************************
%******* Proposition Periods  ********************************************
\begin{proposition}\label{Periods}
Let $\widetilde V$ be a Riemann surface
\begin{equation}\label{GeneralProjectiveCurve}
{\widetilde V}=\left\{z\in \C\P^n:\ P_1(z)=\cdots=P_{n-1}(\zeta)=0\right\}
\end{equation}
defined by the polynomials $\left\{P_j\right\}_{j=1}^{n-1}$.
Let
\begin{equation}\label{GeneralAffineCurve}
V=\left\{z\in {\widetilde V}: \varrho(z)<0\right\}=\widetilde{V}\setminus\bigcup_{r=1}^m V_r,
\end{equation}
be a subdomain of $\widetilde V$, where $\varrho$ is a smooth function on $\widetilde{V}$,
and $\left\{V_r\right\}_{r=1}^m$, $\left(m\leq d=\prod_{j=1}^{n-1}\deg P_j\right)$
is a collection of disjoint neighborhoods in $\widetilde V$ with smooth curves $\sigma_r:=bV_r$
of the points at infinity
$$\left\{\zeta^{(1)},\dots,\zeta^{(d)}\right\}=\widetilde{V}\cap \left\{z\in\C\P^n: \zeta_0=0\right\}.$$
\indent
Let $u$ be a harmonic function on $V$, and let $\left\{\gamma_j\right\}_{j=1}^{2p}$
be a set of closed simple paths representing the generators of the group
$H_1(\widetilde{V},\Z)$.\\
\indent
Then there exists an explicit harmonic function $h$ on $V$, defined by the integrals
$\left\{\int_{\sigma_r}\partial u\right\}_{r=1}^m$, such that for the differential form
\begin{equation*}
\partial(u-h)=\frac{1}{2}\big((u-h)_{\xi}-i(u-h)_{\eta}\big)(d\xi+id\eta)
\end{equation*}
the following equalities hold
\begin{equation}\label{ZeroIntegrals}
\begin{aligned}
&\int_{\sigma_r}\partial(u-h)=0\hspace{0.1in}\mbox{for}\hspace{0.1in}r=1,\dots,m,\\
&\int_{\gamma_j}\partial(u-h)=0\hspace{0.1in}\mbox{for}\hspace{0.1in}j=1,\dots,2p.
\end{aligned}
\end{equation}
\end{proposition}
%******************************************************************

\begin{proof}
We divide the proof of Proposition~\ref{Periods} into two Propositions~\ref{ZeroPeriods-1}
and \ref{ZeroPeriods-2} below.

%******************************************************************
%******* Proposition ZeroPeriods-1  ********************************************
\begin{proposition}\label{ZeroPeriods-1}
\indent
Under conditions of Proposition~\ref{Periods} there exists a set of real-valued harmonic functions
$\left\{h_j\right\}_{j=1}^{m-1}\in C^1(\overline{V})$ such that for any harmonic function
$u\in C^1(\overline{V})$ there exist coefficients $\left\{a_j\right\}_{j=1}^{m-1}\in \C$ satisfying
the first set of equalities in \eqref{ZeroIntegrals}
\begin{equation}\label{aCoefficients}
\int_{\sigma_r}\partial\left(u-\sum_{j=1}^{m-1}a_jh_j\right)=0\hspace{0.1in}\text{for}
\hspace{0.1in} r=1,\dots,m.
\end{equation}
\end{proposition}
%******************************************************************
\indent
The proof of Proposition~\ref{ZeroPeriods-1} is based on the application of three lemmas below.
In those lemmas we use an O. Forster's idea from his book \cite{Fo} to consider two special cases of
construction of harmonic functions on open Riemann surfaces.

%*********************** Lemma FirstStep *****************************
\begin{lemma}\label{FirstStep}
Let $z^{(1)}, z^{(2)} \in U\Subset \widetilde{V}$, where $U$ is a coordinate neighborhood in $\widetilde{V}$
with coordinate function $\zeta$ such that $\zeta(z^{(1)})=0, \zeta(z^{(2)})=1/2$.
Then there exist neighborhoods $U_j\ni z^{(j)}$ and a holomorphic function $f$ on
$\widetilde{V}\setminus\left(U_2\cup z^{(1)}\right)$ such that
\begin{equation}\label{LocalEquality}
f\big|_{U_1}=\zeta^{-1}\cdot\phi,
\end{equation}
where $\phi$ is a holomorphic function in $U_1$ with $\phi(z^{(1)})\neq 0$, and
\begin{equation}\label{IndicesEquality}
\left\{\begin{aligned}
&\frac{1}{2\pi i}\int_{bU_1}\partial\log{|f(\zeta)|^2}=-1,\\
&\frac{1}{2\pi i}\int_{bU_2}\partial\log{|f(\zeta)|^2}=1.
\end{aligned}\right.
\end{equation}
\end{lemma}
%*******************************************************************
\begin{proof}
We consider a smooth function $\psi$ such that
\begin{equation}\label{PsiFunction}
\psi(\zeta)=\left\{\begin{aligned}
&1\ \text{if}\ |\zeta|<r_1\ \text{for some}\ r_1\in (3/4,1),\\
&0\ \text{if}\ |\zeta|>r_2\ \text{for some}\ r_2\in(r_1,1),
\end{aligned}\right.
\end{equation}
and define function $f_0$ on $U\setminus z^{(1)}$ by the formulas
\begin{equation}\label{fZero}
f_0(\zeta)=\left\{\begin{aligned}
&\exp\left(\psi(\zeta)\cdot\log{\left(\frac{\zeta-1/2}{\zeta}\right)}\right)\ \text{if}\ r_1<|\zeta|<1,\\
&\frac{\zeta-1/2}{\zeta}\ \text{if}\ |\zeta|<r_1,
\end{aligned}\right.
\end{equation}
where ${\dis \log{\left(\frac{\zeta-1/2}{\zeta}\right)} }$ is a univalent branch of the $\log$,
which is well defined for $\zeta$ satisfying $|\zeta|\in(r_1,1)$.
From the definition of the function$f_0$ it follows that it can be extended to $\widetilde{V}\setminus U$
as $f_0(\zeta)=1$.\\
\indent
We consider the neighborhoods 
\begin{equation}\label{Neighborhoods}
\begin{aligned}
&U_1=\left\{\zeta\in U: |\zeta|<1/4\right\},\\
&U_2=\left\{\zeta\in U: |\zeta-1/2|<1/8\right\},\\
&U_2(\epsilon)=\left\{\zeta\in U: |\zeta-1/2|<1/8-\epsilon\right\}\Subset U_2,\\
&W=\widetilde{V}\setminus\left(\overline{U}_2(\epsilon)\cup\left\{|\zeta|\leq 1/8\right\}\right),
\end{aligned}
\end{equation}
satisfying the following equalities
\begin{equation}\label{SetEqualities}
\begin{aligned}
&U_1\cup W=\widetilde{V}\setminus \overline{U}_2(\epsilon),\\
&U_1\cap W=\left\{\zeta\in U: 1/8<|\zeta|<1/4\right\}.
\end{aligned}
\end{equation}
To construct a meromorphic function on $\widetilde{V}\setminus \overline{U}_2(\epsilon)$
satisfying \eqref{LocalEquality} and \eqref{IndicesEquality} we consider the smooth differential form $\alpha^{(0,1)}$
\begin{equation*}
\begin{aligned}
&\alpha^{(0,1)}=\frac{\bar\partial f_0}{f_0}\ \text{on}\ W,\\
&\alpha^{(0,1)}=0\ \text{on}\ U_1.
\end{aligned}
\end{equation*}
Using the solvability of the $\bar\partial$-equation on the open Riemann surface
$\widetilde{V}\setminus \overline{U}_2(\epsilon)$ (see \cite{Fo}) we obtain a smooth function $g$
such that $\bar\partial g=\alpha^{(0,1)}$ and the meromorphic function
$f(z)=e^{-g}\cdot f_0$ on $\widetilde{V}\setminus \overline{U}_2(\epsilon)$
having the only pole at $z^{(1)}$ and no zeros, and satisfying on
$\widetilde{V}\setminus\left(\overline{U}_2(\epsilon)\cup z^{(1)}\right)$ the condition
\begin{equation*}
\bar\partial f=-e^{-g} f_0\bar\partial g+e^{-g}\bar\partial f_0
=e^{-g}\cdot f_0\left(-\bar\partial g+\frac{\bar\partial f_0}{f_0}\right)=0.
\end{equation*}
\indent
From \eqref{fZero} we obtain that for $\zeta\in U,\ |\zeta|<r_1$ we have
\begin{equation*}
\log|f(\zeta)|^2=\log{|e^{-g(\zeta)}|^2}+\log{|\zeta-1/2|^2}-\log{|\zeta|^2},
\end{equation*}
and therefore
\begin{multline}\label{UoneEquality}
\frac{1}{2\pi i}\int_{bU_1}\partial\log{|f(\zeta)|^2}
=\frac{1}{2\pi i}\left(\int_{bU_1}\partial\log{|e^{-g(\zeta)}|^2}
+\int_{bU_1}\partial\log{|\zeta-1/2|^2}-\int_{bU_1}\partial\log{|\zeta|^2}\right)\\
=\frac{1}{2\pi i}\left(\int_{bU_1}\frac{-e^{-{\bar g}(\zeta)}e^{-g(\zeta)}}{|e^{-g(\zeta)}|^2}\partial g(\zeta)
+\int_{bU_1}\frac{{\bar\zeta}-1/2}{|\zeta-1/2|^2}d\zeta
-\int_{bU_1}\frac{\bar\zeta}{|\zeta|^2}d\zeta\right)\\
=\frac{1}{2\pi i}\left(-\int_{bU_1}\partial g(\zeta)+\int_{bU_1}\frac{d\zeta}{(\zeta-1/2)}
-\int_{bU_1}\frac{d\zeta}{\zeta}\right).
\end{multline}
Since $g$ is holomorphic in $\left\{\zeta\in U: |\zeta|<r_1\right\}\setminus \overline{U}_2(\epsilon)$,
we have $\partial g=d g$, and therefore
$$\int_{bU_1}\partial g(\zeta)=\int_{bU_1}dg(\zeta)=0.$$
Similarly, we have
$$\int_{bU_1}\frac{d\zeta}{(\zeta-1/2)}=0\ \text{and}\ \int_{bU_1}\frac{d\zeta}{\zeta}=2\pi i.$$
\indent
Then using the equalities above and similar equalities for ${\dis \int_{bU_2} }$ we obtain
equalities \eqref{IndicesEquality}.
\end{proof}
%*********************** End Lemma FirstStep *****************************

%*********************** TwoPointFunction *****************************
\begin{lemma}\label{TwoPointFunction}
Let $U_0\Subset \widetilde{V}$ be an open neighborhood in $\widetilde{V}$,
and let $z^{(1)}, z^{(2)} \in U\Subset \widetilde{V}\setminus\overline{U}_0$, where $U$ is a coordinate
neighborhood in $\widetilde{V}$ with coordinate function $\zeta$ such that $\zeta(z^{(1)})=0, \zeta(z^{(2)})=1/2$.
Then there exist neighborhoods $U_j\ni z^{(j)}\ (j=1,2)$ and a meromorhic function
$f$ on $\widetilde{V}\setminus\left(\overline{U}_0\cup z^{(1)}\cup z^{(2)}\right)$ such that
\begin{equation}\label{TwoPointLocalEquality}
\left\{\begin{aligned}
&\frac{1}{2\pi i}\int_{bU_1}\partial\log{|f(\zeta)|^2}=-1,\\
&\frac{1}{2\pi i}\int_{bU_2}\partial\log{|f(\zeta)|^2}=1.
\end{aligned}\right.
\end{equation}
\end{lemma}
%*************************************************************************
\begin{proof}
As in the proof of Lemma~\ref{FirstStep} we consider a smooth function $\psi$
satisfying conditions \eqref{PsiFunction} and define the function $f_0$
on $U\setminus\left(z^{(1)}\cup z^{(2)}\right)$ by \eqref{fZero}.
Again from the definition of the function $f_0$ it follows that it can be extended to $\widetilde{V}\setminus U$
as $f_0(\zeta)=1$.\\
\indent
We consider the neighborhoods
\begin{equation}\label{TwoNeighborhoods}
\begin{aligned}
&U_1=\left\{\zeta\in U: |\zeta|<1/4\right\},\\
&U_2=\left\{\zeta\in U: |\zeta-1/2|<1/4\right\},\\
&W=\widetilde{V}\setminus\left(\overline{U}_0\cup\left\{|\zeta|\leq1/8\right\}
\cup\left\{|\zeta-1/2|\leq1/8\right\}\right),
\end{aligned}
\end{equation}
satisfying the following equalities
\begin{equation}\label{TwoSetEqualities}
\begin{aligned}
&\left(U_1\cup U_2\right)\cup W=\widetilde{V}\setminus \overline{U}_0,\\
&\left(U_1\cup U_2\right)\cap W=\left\{\zeta\in U: 1/8<|\zeta|<1/4, 1/8<|\zeta-1/2|<1/4\right\}.
\end{aligned}
\end{equation}
To construct a meromorphic function on $\widetilde{V}\setminus \overline{U}_0$
satisfying \eqref{TwoPointLocalEquality}
we consider the smooth differential form $\alpha^{(0,1)}$
\begin{equation*}
\begin{aligned}
&\alpha^{(0,1)}=\frac{\bar\partial f_0}{f_0}\ \text{on}\ W,\\
&\alpha^{(0,1)}=0\ \text{on}\ \left(U_1\cup U_2\right).
\end{aligned}
\end{equation*}
Then on the open Riemann surface $\widetilde{V}\setminus \overline{U}_0$
we consider a smooth function $g$ such that
$\bar\partial g=\alpha^{(0,1)}$ and the meromorphic function $f(z)=e^{-g}\cdot f_0$
on $\widetilde{V}\setminus \left(\overline{U}_0\cup z^{(1)}\cup z^{(2)}\right)$ satisfying
\begin{equation*}
\bar\partial f=-e^{-g} f_0\bar\partial g+e^{-g}\bar\partial f_0
=e^{-g}\cdot f_0\left(-\bar\partial g+\frac{\bar\partial f_0}{f_0}\right)=0.
\end{equation*}
\indent
From \eqref{fZero} we obtain that for $\zeta\in U,\ |\zeta|<r_1$ we have
\begin{equation*}
\log|f(\zeta)|^2=\log{|e^{-g(\zeta)}|^2}+\log{|\zeta-1/2|^2}-\log{|\zeta|^2},
\end{equation*}
and therefore, as in \eqref{UoneEquality}
\begin{equation*}
\frac{1}{2\pi i}\int_{bU_1}\partial\log{|f(\zeta)|^2}
=\frac{1}{2\pi i}\left(-\int_{bU_1}\partial g(\zeta)+\int_{bU_1}\frac{d\zeta}{(\zeta-1/2)}
-\int_{bU_1}\frac{d\zeta}{\zeta}\right).
\end{equation*}
Since $g$ is holomorphic in $\left\{\zeta\in U: |\zeta|<r_1\right\}\setminus\left(U_1\cup U_2\right)$,
we have $\partial g=d g$, and therefore
\begin{equation*}
\left\{\begin{aligned}
\int_{bU_1}\partial g(\zeta)=\int_{bU_1}dg(\zeta)=0,\\
\int_{bU_2}\partial g(\zeta)=\int_{bU_2}dg(\zeta)=0.
\end{aligned}\right.
\end{equation*}
Using the equalities above together with equalities
\begin{equation*}
\int_{bU_1}\frac{d\zeta}{(\zeta-1/2)}=\int_{bU_2}\frac{d\zeta}{\zeta}=0
\end{equation*}
we obtain equalities \eqref{TwoPointLocalEquality}.
\end{proof}
%*********************** End Lemma TwoPointFunction *****************************
\indent
The following statement formulated in the terminology of the book \cite{Fo}
is a corollary of Lemma~\ref{TwoPointFunction}:
\begin{corollary}
Any divisor $D$ with $\deg D=0$ on an open Riemann surface is solvable.
\end{corollary}

%*********************** Lemma Induction *****************************
\begin{lemma}\label{Induction}
Let $z^{(1)}, z^{(2)}$ be two points in $\widetilde{V}$, let $U_j\ni z^{(j)}$
be two neighborhoods of those points in $\widetilde{V}$, and let $U_j(\epsilon)\subset U_j$ be slightly
smaller neighborhoods. Then there exists a holomorphic function $f$ on
$\widetilde{V}\setminus\left(\overline{U}_1(\epsilon)\cup \overline{U}_2(\epsilon)\right)$ such that
\begin{equation}\label{TwoPointEquality}
\left\{\begin{aligned}
&\frac{1}{2\pi i}\int_{bU_1}\partial\log{|f(\zeta)|^2}=-1,\\
&\frac{1}{2\pi i}\int_{bU_2}\partial\log{|f(\zeta)|^2}=1.
\end{aligned}\right.
\end{equation}
\end{lemma}
\begin{proof}
For the points $z^{(1)}, z^{(2)}$ we consider a sequence of points
$z^{(1)}=w^{(1)},\dots,w^{(r)}=z^{(2)}$ such that every two consecutive points
in the sequence belong to the same coordinate neighborhood. Applying
Lemma~\ref{FirstStep} we construct two meromorphic functions: $g_1$ on
$\widetilde{V}\setminus \overline{U}_1(\epsilon)$ with zero at $w^{(2)}$ and $g_{r-1}$
on $\widetilde{V}\setminus \overline{U}_2(\epsilon)$ with pole at $w^{(r-1)}$. Then using
Lemma~\ref{TwoPointFunction} we construct a sequence of meromorphic functions
$g_j (j=2,\dots,r-2)$ on $\widetilde{V}\setminus
\left(\overline{U}_1(\epsilon)\cup \overline{U}_2(\epsilon)\right)$
such that $g_j$ has a pole at $w^{(j)}$ and a zero at $w^{(j+1)}$. Defining then
\begin{equation*}
f=\prod_{j=1}^{r-1}g_j
\end{equation*}
we obtain the sought function.
\end{proof}
%*********************** End Lemma Induction *****************************

Proof of Proposition~\eqref{ZeroPeriods-1}
\begin{proof}
Using Lemma~\ref{Induction} we construct a set of holomorphic functions
$\left\{f_r\ (r=1,\dots,m-1)\right\}$ satisfying
\eqref{TwoPointEquality} on $\widetilde{V}\setminus\left(V_r\cup V_{r+1}\right)$,
and define functions $h_r=\log{|f_r|^2}$.
Then we prove the existence for an arbitrary harmonic $u\in C^1(\overline{V})$ of the coefficients
satisfying \eqref{aCoefficients}.
Let $l\in\Z$ be the number such that there exist coefficients $a_j$ for $j>l$ satisfying
\begin{equation}\label{lCondition}
\int_{bV_j}\partial\left(u-\sum_{l+1}^{m-1}a_j h_j\right)=0\ \text{for}\ j=l+2,\dots,m.
\end{equation}
We prove the proposition by induction with respect to $l$. Namely, assuming that for
some $l$ equality \eqref{lCondition} is satisfied for $j>l+1$, we define
$a_l=\int_{bV_{l+1}}\partial\left(u-\sum_{l+1}^{m-1}a_j h_j\right)$ and obtain that
equality
$$\int_{bV_j}\partial\left(u-\sum_{l}^{m-1}a_j h_j\right)=0$$
is satisfied for $j\geq l+1$.\end{proof}
%******* End Proposition {ZeroPeriods-1}  ***************************************

\indent
In the following proposition we prove the second set of equalities in \eqref{ZeroIntegrals} for the function
$u-\sum_{j=1}^{m-1}a_jh_j$ constructed in Proposition~\ref{ZeroPeriods-1}.

%******************************************************************
%******* Proposition ZeroPeriods-2  ********************************************
\begin{proposition}\label{ZeroPeriods-2}
Let $u$ be a harmonic function on $V$, and let ${\dis h=\sum_{j=1}^{m-1}a_jh_j }$ be the function defined in
Proposition~\ref{ZeroPeriods-1}. Then the function $u-h$ satisfies the second set of equalities in
\eqref{ZeroIntegrals}.
\end{proposition}
%******************************************************************
\begin{proof}
We consider a basis $\left\{\omega_j\right\}_{j=1}^p$ of holomorphic forms in $H^1(\widetilde{V},\cal{O})$
(see \cite{S}), and the corresponding $(2p\times 2p)$ period matrix
\begin{equation*}
\P=\left[\begin{array}{ccc}
\alpha_{1}&\cdots&\alpha_{2p}\\
\overline{\alpha}_{1}&\cdots&\overline{\alpha}_{2p}
\end{array}\right],
\end{equation*}
where
\begin{equation*}
\alpha_i=\left(\begin{array}{c}
\int_{\gamma_i}\omega_1\\
\vdots\\
\int_{\gamma_{i}}\omega_p
\end{array}\right),\hspace{0.2in}
\overline{\alpha}_i=\left(\begin{array}{c}
\int_{\gamma_i}\overline{\omega}_1\\
\vdots\\
\int_{\gamma_i}\overline{\omega}_p
\end{array}\right)\hspace{0.1in}\text{for}\ i=1,\dots,2p.
\end{equation*}
\indent
Normalizing the forms $\left\{\omega_j\right\}_{j=1}^p$ we transform the matrix $\P$ into
\begin{equation}\label{P-matrix}
\P=\left[\begin{array}{cc}
I&A+iB\\
I&A-iB
\end{array}\right]
=\left[\begin{array}{cccccc}
1&\cdots&0&\alpha_{p+1,1}&\cdots&\alpha_{2p,1}\\
\vdots&\ddots&\vdots&\vdots&\ddots&\vdots\\
0&\cdots&1&\alpha_{p+1,p}&\cdots&\alpha_{2p,p}\\
1&\cdots&0&\bar{\alpha}_{p+1,1}&\cdots&\bar{\alpha}_{2p,1}\\
\vdots&\ddots&\vdots&\vdots&\ddots&\vdots\\
0&\cdots&1&\bar{\alpha}_{p+1,p}&\cdots&\bar{\alpha}_{2p,p}
\end{array}\right]
\end{equation}
with symmetric matrix $A+iB$, and positive definite $B$ (\cite{S}).\\
\indent
For the $2p$-vector $c_i=\int_{\gamma_i}\partial(u-h)$ we consider the system
of linear equations
\begin{equation}\label{PeriodVectorEquations}
\left[\begin{array}{ccc}
c_1&\cdots&c_{2p}
\end{array}\right]
=\left[\begin{array}{ccc}
\zeta_1&\cdots&\zeta_{2p}
\end{array}\right]
\left[\begin{array}{cc}
I&A+iB\\
I&A-iB
\end{array}\right]
\end{equation}
with solution
$\left[\begin{array}{ccc}
\lambda_1&\lambda_2
\end{array}\right] \in \C^{2p}$, where $\lambda_1, \lambda_2 \in \C^p$ are defined by the formula
\begin{equation}\label{lambdaSolution}
\left[\begin{array}{ccc}
\lambda_1&\lambda_2
\end{array}\right]
=\left[\begin{array}{ccc}
c_1&\cdots&c_{2p}
\end{array}\right]\left[\begin{array}{cc}
(iAB^{-1}+I)/2&(-iAB^{-1}+I)/2\\
-iB^{-1}/2&iB^{-1}/2
\end{array}\right].
\end{equation}
We denote for $\lambda\in \C^p$
\begin{equation*}
\left\langle \lambda, \omega\right\rangle=\sum_{j=1}^p\lambda^{(j)}\omega_j,
\end{equation*}
and for
$\left[\begin{array}{ccc}
\lambda_1&\lambda_2
\end{array}\right]$
defined in \eqref{lambdaSolution} obtain from \eqref{ZeroIntegrals} the following equalities
\begin{equation}\label{ZeroNewIntegrals}
\begin{aligned}
&\int_{\gamma_i}\big(\partial(u-h)-\langle\lambda_1, \omega\rangle
-\langle\lambda_2, \overline{\omega}\rangle\big)=0
\hspace{0.1in}\text{for}\ i=1,\dots,2p,\\
&\int_{\sigma_r}\big(\partial(u-h)-\langle\lambda_1, \omega\rangle
-\langle\lambda_2, \overline{\omega}\rangle\big)=0
\hspace{0.1in}\text{for}\ r=1,\dots,m.
\end{aligned}
\end{equation}
Equalities \eqref{ZeroNewIntegrals} imply the existence of a harmonic function $g$ on $V$ such that
\begin{equation*}
\partial(u-h)-\langle\lambda_1, \omega\rangle
-\langle\lambda_2, \overline{\omega}\rangle=dg,
\end{equation*}
or equivalently,
\begin{equation}\label{VectorpsiConditions}
\left\{
\begin{aligned}
&\partial\left(u-h-g\right)=\langle\lambda_1, \omega\rangle,\\
&\bar\partial g=-\langle\lambda_2, \overline{\omega}\rangle.
\end{aligned}
\right.
\end{equation}
\indent
We notice that second set of equalities in \eqref{ZeroNewIntegrals} is satisfied automatically since $h$
satisfies Lemma~\ref{ZeroPeriods-1} and the forms
$\langle\lambda_1, \omega\rangle$ and $\langle\lambda_2, \overline{\omega}\rangle$ are closed
in $\widetilde{V}$ and therefore in each $V_r$.
Also function $g$ is harmonic since
\begin{equation*}
\partial\bar\partial g=-\langle\lambda_2, \partial\overline{\omega}\rangle=0,
\end{equation*}
because the forms $\left\{\omega_j\right\}_{j=1}^p$ are holomorphic, and the forms
$\left\{\overline{\omega}_j\right\}_{j=1}^p$ are antiholomorphic.\\
\indent
To simplify system \eqref{VectorpsiConditions}
we rewrite the first equality in \eqref{ZeroNewIntegrals}
for $\left[\begin{array}{ccc}
\lambda_1&\lambda_2
\end{array}\right]$ as
\begin{equation*}
\begin{aligned}
&c_k=\lambda_1^{(k)}+\lambda_2^{(k)}\hspace{0.1in}\text{for}\ k=1,\dots,p,\\
&c_k=\left\langle \lambda_1, \left[A+iB\right]_k\right\rangle
+\left\langle \lambda_2, \left[A-iB\right]_k\right\rangle\hspace{0.1in}
\text{for}\ k=p+1,\dots,2p
\end{aligned}
\end{equation*}
and obtain equalities
\begin{equation*}
\begin{aligned}
&\int_{\gamma_k}\partial(u-h)=c_k=\lambda_1^{(k)}+\lambda_2^{(k)}
=\lambda_1^{(k)}\int_{\gamma_k}\omega_k
+\lambda_2^{(k)}\int_{\gamma_k}\overline{\omega}_k
\hspace{0.1in}\text{for}\ k=1,\dots,p,\\
&\int_{\gamma_k}\partial(u-h)=c_k=\sum_{j=1}^p\lambda_1^{(j)}\alpha_{jk}
+\sum_{j=1}^p\lambda_2^{(j)}\overline{\alpha}_{jk}\\
&\hspace{1.0in}=\sum_{j=1}^p\lambda_1^{(j)}\int_{\gamma_k}\omega_j
+\sum_{j=1}^p\lambda_2^{(j)}\int_{\gamma_k}\overline{\omega}_j
\hspace{0.1in}\text{for}\ k=p+1,\dots,2p,
\end{aligned}
\end{equation*}
where we denoted
\begin{equation*}
\alpha_{jk}=a_{jk}+ib_{jk}=\int_{\gamma_k}\omega_j,\hspace{0.1in}
\text{an element of}\hspace{0.1in}
\alpha_k=\left(\begin{array}{c}
\int_{\gamma_k}\omega_1\\
\vdots\\
\int_{\gamma_k}\omega_p
\end{array}\right).
\end{equation*}
\indent
Since function $(u-h)$ is real-valued, we have equality
\begin{equation*}
\partial(u-h)=\frac{1}{2}\Big[d(u-h)+id^c(u-h)\Big],
\end{equation*}
where $d(u-h)$ and $d^c(u-h)$ are real valued forms.
Then using equalities
\begin{equation*}
\int_{\gamma_i}d(u-h)=0\hspace{0.1in}\text{for}\ i=1,\dots,2p,
\end{equation*}
we obtain that numbers $c_1,\cdots,c_{2p}$ are imaginary, and therefore
\begin{equation*}
\begin{aligned}
&\lambda_1=\tau+i\theta_1,\\
&\lambda_2=-\tau+i\theta_2,
\end{aligned}
\end{equation*}
with $\tau, \theta \in \R^p$.
Using equalities
\begin{equation*}
\text{Re}\left[(\tau+i\theta_1)(A+iB)+(-\tau+i\theta_2)(A-iB)\right]
=-\theta_1B+\tau A-\tau A+\theta_2B=0,
\end{equation*}
and the nondegeneracy of $B$, we obtain that $\theta_1=\theta_2$,
and therefore $\lambda_2=-\overline{\lambda_1}$.\\
\indent
Denoting $\lambda=\lambda_1$,  we rewrite system \eqref{VectorpsiConditions} as
\begin{equation}\label{d-dBar-Conditions}
\left\{
\begin{aligned}
&\partial\left(u-h-g\right)=\langle\lambda, \omega\rangle,\\
&\bar\partial g=\langle\bar{\lambda}, \overline{\omega}\rangle.
\end{aligned}
\right.
\end{equation}
\indent
In the following two lemmas we compute the constants $\lambda$ in the right-hand sides
of equalities \eqref{d-dBar-Conditions} separately in cases $p=1$ and $p>1$.

%******************************************************************
%******* Lemma ZeroConstants-1  ********************************************
\begin{lemma}\label{ZeroConstants-1}
Let $\widetilde{V}$ be a torus, i.e. $p=1$, and let $u$, $h$, and $g$ be as in Proposition~\ref{ZeroPeriods-2}. Then the constant $\lambda$ in the right-hand sides of equalities \eqref{d-dBar-Conditions} is zero.
\end{lemma}
%******************************************************************
\begin{proof}
Since $\widetilde{V}$ is a torus, we can take $\omega=d\zeta$, where $\zeta=\xi+i\eta$
is the coordinate in $\C$ - the universal covering of $\widetilde{V}$ - and rewrite the second
equality in \eqref{d-dBar-Conditions} as
\begin{equation}\label{lambdaFormEquality}
\left(g_{\xi}+ig_{\eta}\right)d\bar\zeta=2\bar\lambda d\bar\zeta.
\end{equation}
\indent
Assuming that the fundamental region of $\widetilde{V}$ is the parallelogram
\begin{equation*}
{\cal F}=\left\{(u,v)\in \R^2=\C, 0<u<L, 0<v<M\right\},
\end{equation*}
where $u=a_{11}\xi+a_{12}\eta,\ v=a_{21}\xi+a_{22}\eta$ with some real-valued nondegenerate matrix
${\dis \left[\begin{array}{cc}
a_{11}&a_{12}\\
a_{21}&a_{22}
\end{array}\right] }$,
we obtain that the partial derivatives with respect to $\xi, \eta$ satisfy equalities
\begin{equation*}
\begin{aligned}
&g_{\xi}=a_{11}g_u+a_{21}g_v,\\
&g_{\eta}=a_{12}g_u+a_{22}g_v.
\end{aligned}
\end{equation*}
Then equality \eqref{lambdaFormEquality} can be rewritten as
\begin{equation}\label{lambdaEquality}
\left[\left(a_{11}g_u+a_{21}g_v\right)+i\left(a_{12}g_u+a_{22}g_v\right)\right]
=\left[(a_{11}+ia_{12})g_u+(a_{21}+ia_{22})g_v\right]
=2\bar\lambda.
\end{equation}
\indent
Considering the Fourier series of $g_u$ with respect to the variable $u$ in the region
\begin{equation*}
{\cal R}_{u}(\epsilon)={\cal F}\cap\left\{|v|<\epsilon\right\},
\end{equation*}
where $\epsilon$ is a sufficiently small number, we obtain the series
\begin{equation}\label{g-uParallelogramSeries}
g_u(u,v)=\sum_{k\neq 0}\alpha_k(v)e^{2\pi ik(u/L)},
\end{equation}
where the coefficients $\alpha_k(v)\ (k\neq 0)$ are computed by the formula
\begin{equation*}
\alpha_k(v)=\frac{1}{L}\int_0^Lg_u(t,v)e^{-2\pi ik(t/L)}dt.
\end{equation*}
We notice that the zeroth order term in the series \eqref{g-uParallelogramSeries} is absent,
because the function $g$ takes the same value at the end points of each interval $[0, L]\times v$,
since these points are identified on the Riemann surface $\widetilde{V}$.\\
\indent
Similarly, in the region ${\cal R}_{v}(\epsilon)={\cal F}\cap\left\{|u|<\epsilon\right\}$
we obtain the series
\begin{equation}\label{g-vParallelogramSeries}
g_v(u,v)=\sum_{k\neq 0}\beta_k(u)e^{2\pi ik(v/M)},
\end{equation}
where the coefficients $\beta_k(u)\ (k\neq 0)$ are computed by the formula
\begin{equation*}
\beta_k(u)=\frac{1}{M}\int_0^Mg_v(u,t)e^{-2\pi ik(t/M)}dt.
\end{equation*}
\indent
Substituting the series \eqref{g-uParallelogramSeries} and \eqref{g-vParallelogramSeries} in equality
\eqref{lambdaEquality} in the region
\begin{equation*}
{\cal R}_{uv}\left(L/N,M/N\right)={\cal F}\cap\left\{0<u<L/N, 0<v<M/N\right\},
\end{equation*}
where $N\in\Z$ is large enough, we obtain equality 
\begin{equation}\label{Fourier-Equality}
(a_{11}+ia_{12})\left(\sum_{k\neq 0}\alpha_k(v)e^{2\pi ik(u/L)}\right)
+(a_{21}+ia_{22})\left(\sum_{k\neq 0}\beta_k(u)e^{2\pi ik(v/M)}\right)=2\bar\lambda.
\end{equation}
Then, considering the Fourier series of coefficients $\alpha_k$ and $\beta_k$
with respect to $v$ and $u$ respectively
\begin{equation*}
\alpha_k(v)=\sum_{j\in\Z}\alpha_{kj}e^{2\pi ij(Nv/M)},\hspace{0.2in}
\beta_k(u)=\sum_{j\in\Z}\beta_{kj}e^{2\pi ij(Nu/L)},
\end{equation*}
and comparing the double Fourier series in the right and
left-hand sides of \eqref{Fourier-Equality}, we obtain the following equality
in ${\cal R}_{u,v}(L/N,M/N)$
\begin{equation*}
(a_{11}+ia_{12})\left(\sum_{k\neq 0,\ j\in\Z}\alpha_{kj}e^{2\pi i\left[jNv/M+ku/L\right]}\right)
+(a_{21}+ia_{22})\left(\sum_{k\neq 0,\ j\in\Z}\beta_{kj}e^{2\pi i\left[kv/M+jNu/L\right]}\right)
=2\bar\lambda,
\end{equation*}
which cannot be satisfied unless $\lambda=0$.
\end{proof}
%*****************************************************
%******* End of Lemma {ZeroConstants-1}  *************************************
\indent
In the lemma below we prove the statement similar to Lemma~\ref{ZeroConstants-1} for the case
$p>1$ with the Riemann surface $\widetilde{V}$ having the unit disk $\D$ as the universal covering.

%******************************************************************
%******* Lemma ZeroConstants-2  ****************************************
\begin{lemma}\label{ZeroConstants-2}
Let $\widetilde{V}$ be a Riemann surface of genus $p>1$, and let $u$, $h$, and $g$
be as in Proposition~\ref{ZeroPeriods-2}. Then the constants $\left\{\lambda^{(j)}\right\}_{j=1}^p$
in the right-hand sides of equalities \eqref{d-dBar-Conditions} are zeros.
\end{lemma}
%******************************************************************
\begin{proof}
As a fundamental region ${\cal F}\subset\D$ corresponding to the compact Riemann surface
$\widetilde{V}$ we choose a polygon with vertices $\left\{P_s\right\}_{s=1}^{4p}$
and hyperbolic geodesic sides $\widetilde{[P_s, P_{s+1}]}$ (see for example \cite{S, Sp, J}).
We also consider the \lq\lq Euclidean\rq\rq\ polygon ${\cal P}\subset \D$
such that ${\cal P}\supset {\cal F}$, which is constructed on the same vertices
$\left\{P_s\right\}_{s=1}^{4p}$ with the sides $[P_s, P_{s+1}]$, and denote
\begin{equation*}
{\cal P}\setminus{\cal F}=\bigcup_{s=1}^{4p}{\cal L}_s,
\end{equation*}
where $4p$ mutually disjoint regions ${\cal L}_s$ are bounded by the arcs of geodesic sides of the
fundamental polygon ${\cal F}$ and straight linear sides of the polygon ${\cal P}$.\\
\indent
Throughout the proof of the Lemma we assume the functions considered below to be defined on a sufficiently
large neighborhood of ${\cal F}$ in $\D$, containing ${\cal P}$, via the automorphy condition
\begin{equation*}
f(Tz)=f(z),
\end{equation*}
where $T$ is an element of the Fuchsian group corresponding to the Riemann surface $\widetilde{V}$.\\
\indent
Using the standard identification scheme of the sides of ${\cal F}$ (see \cite{S}) we divide
the set of sides of ${\cal F}$ into $p$ blocks of the form
$\left\{\varsigma_j\tau_j\varsigma_j^{-1}\tau_j^{-1}\right\}_{j=1}^p$, and assume that the holomorphic
differential forms $\left\{\omega_j\right\}_{j=1}^p$ from the basis in $H^1(\widetilde{V},\cal{O})$
(see Proposition~\ref{ZeroPeriods-2}) are chosen so that
\begin{equation}\label{BasisConditions}
\left\{
\begin{aligned}
&\int_{\tau_k}\omega_j=\delta_{kj},\\
&\int_{\varsigma_k}\omega_j=\alpha_{p+k,j},
\end{aligned}\right.
\end{equation}
where $\delta_{kj}$ is the Kronecker's delta, and $\alpha_{p+k,j}$ is the element of matrix $\P$
in \eqref{P-matrix}.
Using the introduced notations we can rewrite the second equality in \eqref{d-dBar-Conditions} as
\begin{equation}\label{lambdaSum}
\left(g_{\xi}+ig_{\eta}\right)d\bar\zeta=2\sum_{j=1}^p\bar\lambda_j\bar\omega_j,
\end{equation}
where $\zeta=\xi+i\eta$ is a holomorphic coordinate in the unit disk $\D$.\\
\indent
In what follows we fix five consecutive vertices $P_s (s=1,\dots,5)$ of ${\cal F}$ and the corresponding
sides $\varsigma_1=\widetilde{[P_1, P_2]}$, $\tau_1=\widetilde{[P_2, P_3]}$,
$\varsigma_1^{-1}=\widetilde{[P_3, P_4]}$, $\tau_1^{-1}=\widetilde{[P_4, P_5]}$,
and consider the coordinate system with the origin at the vertex $P_3$ and axes $u$ and $v$
being the sides of $\cal P$ - $[P_3, P_2]$ and $[P_3, P_4]$ respectively.
Since the coordinates $(u,v)$ satisfy linear relations
$u=a_{11}\xi+a_{12}\eta+u_0,\ v=a_{21}\xi+a_{22}\eta+v_0$
with some real-valued nondegenerate matrix
\begin{equation}\label{matrixA}
A=\left[\begin{array}{cc}
a_{11}&a_{12}\\
a_{21}&a_{22}
\end{array}\right],
\end{equation}
the partial derivatives with respect to $\xi, \eta$ satisfy equalities
\begin{equation*}
\begin{aligned}
&g_{\xi}=a_{11}g_u+a_{21}g_v,\\
&g_{\eta}=a_{12}g_u+a_{22}g_v,
\end{aligned}
\end{equation*}
and, therefore equality \eqref{lambdaSum} can be rewritten as
\begin{equation}\label{Equality-in-uv}
\left[\left(a_{11}g_u+a_{21}g_v\right)+i\left(a_{12}g_u+a_{22}g_v\right)\right]d\bar\zeta
=\left[(a_{11}+ia_{12})g_u+(a_{21}+ia_{22})g_v\right]d\bar\zeta
=2\sum_{j=1}^p\bar\lambda_j\bar\omega_j.
\end{equation}
\indent
Our goal is to prove by comparing the Fourier coefficients of the right and left-hand sides in equality
\eqref{Equality-in-uv} that it cannot hold unless $\left\{\lambda^{(j)}=0\right\}_{j=1}^p$.
We consider the parallelogram on the vertices $P_2, P_3, P_4$ with vertex $P_2$ having coordinates
$(u,v)=(L,0)$, and vertex $P_4$ having coordinates $(u,v)=(0,M)$.
Considering the Fourier series of $g_u$ with respect to variable $u$ in the region
\begin{equation*}
{\cal R}_{\tau_1}(\epsilon)={\cal P}
\cap\left\{(u,v):0<v<\epsilon, 0\leq u\leq L\right\},
\end{equation*}
where $\epsilon$ is a sufficiently small number, we obtain the series
\begin{equation}\label{g-uSeries}
g_u(u,v)=\sum_{k\in\Z}\alpha_k(v)e^{2\pi ik(u/L)}.
\end{equation}
The coefficients $\alpha_k(v)$ of the series above are computed by the formula
\begin{equation*}
\alpha_k(v)=\frac{1}{L}\int_0^L g_u(t,v)e^{-2\pi ik(t/L)}dt,
\end{equation*}
and the zeroth order term satisfies condition
\begin{equation}\label{u-Condition}
\left|\alpha_0(v)\right|<C\cdot v\hspace{0.1in}\text{as}\
v\to 0\hspace{0.1in}\text{with some constant}\ C>0,
\end{equation}
because function $g(u,0)$ takes the same value at the end points of the interval
$[P_3, P_2]$, since these points are identified on the Riemann surface $\widetilde{V}$.\\
\indent
Similarly, we construct the series
\begin{equation}\label{g-vSeries}
g_v(u,v)=\sum_{k\in\Z}\beta_k(u)e^{2\pi ik(v/M)}
\end{equation}
in the region
\begin{equation*}
{\cal R}_{\varsigma_1^{-1}}(\delta)
={\cal P}\cap\left\{(u,v):0<u<\delta, 0\leq v\leq M\right\},
\end{equation*}
where $M$ is the length of ${[P_3, P_4]}$, and the zeroth order term satisfies condition
\begin{equation}\label{v-Condition}
\left|\beta_0(u)\right|<C\cdot u\hspace{0.1in}\text{as}\ u\to 0.
\end{equation}
\indent
We represent $\bar\omega_j=f_j(\zeta)d\bar\zeta$, rewrite the form in the right-hand side of
\eqref{Equality-in-uv} as
\begin{equation*}
2\sum_{j=1}^p\bar\lambda_j\bar\omega_j
=2\left(\sum_{j=1}^p\bar\lambda_jf_j(\zeta)\right)d\bar\zeta,
\end{equation*}
and consider the Fourier series of the functions $\left\{f_j(\zeta)\right\}_{j=1}^p$
in the  region ${\cal R}_{\tau_1}(\epsilon)$. For $j>1$ we use equality
\begin{equation*}
d\bar\zeta=d(\xi-i\eta)=(b_{11}du+b_{12}dv)-i(b_{21}du+b_{22}dv)
=(b_{11}-ib_{21})du+(b_{12}-ib_{22})dv
\end{equation*}
with matrix ${\dis B=\left[\begin{array}{cc}
b_{11}&b_{12}\\
b_{21}&b_{22}
\end{array}\right] }$ being the inverse of matrix $A$ from \eqref{matrixA} to obtain equality
\begin{equation}\label{ZeroFourier}
\int_0^L f_j(u,0)du=\frac{1}{b_{11}-ib_{21}}\int_0^L
f_j(\zeta)d\bar\zeta=\frac{1}{b_{11}-ib_{21}}\int_{\tau_{1}}f_j(\zeta)d\bar\zeta=0,
\end{equation}
where in the last equality we used the closedness of the form $\omega_j$ and
the first equality from \eqref{BasisConditions}.\\
\indent
Then for the function $F(\zeta)=\sum_{j=2}^p\bar\lambda_jf_j(\zeta)$
in the region ${\cal R}_{\tau_1}(\epsilon)$ we obtain the representation
\begin{equation}\label{F_uFourier}
F(\zeta)=\sum_{k\in\Z}\mu_k(v)e^{2\pi ik(u/L)}
\end{equation}
with
\begin{equation*}
\mu_k(v)=\frac{1}{L}\int_0^LF(u,v)e^{-2\pi ik(u/L)}du.
\end{equation*}
We notice that from equality \eqref{ZeroFourier} follows the estimate
\begin{equation}\label{F_uCondition}
\left|\mu_0(v)\right|<C\cdot v\hspace{0.1in}\text{as}\ v\to 0.
\end{equation}
\indent
For the function $f_1$, similarly to \eqref{ZeroFourier} we obtain equality
\begin{equation*}
\int_0^L f_1(u,0)du
=\frac{1}{b_{11}-ib_{21}}\int_0^L f_1(\zeta)d\bar\zeta
=\frac{1}{b_{11}-ib_{21}}\int_{\tau_{1}}f_1(\zeta)d\bar\zeta=\frac{1}{b_{11}-ib_{21}},
\end{equation*}
and, therefore, for the Fourier series
\begin{equation}\label{f_1Fourier}
f_1(\zeta)=\sum_{k\in\Z}\nu_k(v)e^{2\pi ik(u/L)}
\end{equation}
with
\begin{equation*}
\nu_k(v)=\frac{1}{L}\int_0^L f_1(u,v)e^{-2\pi ik(u/L)}du
\end{equation*}
we have the estimate
\begin{equation}\label{f_1Condition}
\left|\nu_0(v)-\frac{1}{L(b_{11}-ib_{21})}\right|<C\cdot v\hspace{0.1in}\text{as}\ v\to 0.
\end{equation}
\indent
Substituting series \eqref{g-uSeries}, \eqref{g-vSeries}, \eqref{F_uFourier}, and
\eqref{f_1Fourier} into equality \eqref{Equality-in-uv} we obtain in
\begin{equation*}
{\cal R}_{\tau_1\varsigma_1}(\epsilon,\delta)
={\cal R}_{\tau_1}(\epsilon)\cap{\cal R}_{\varsigma_1^{-1}}(\delta)
\end{equation*}
the following equality
\begin{multline}\label{SingleFourier}
(a_{11}+ia_{12})\left(\sum_{k\in\Z}\alpha_k(v)e^{2\pi ik(u/L)}\right)
+(a_{21}+ia_{22})\left(\sum_{k\in\Z}\beta_k(u)e^{2\pi ik(v/M)}\right)\\
=\sum_{k\in\Z}\mu_k(v)e^{2\pi ik(u/L)}
+\bar\lambda_1\sum_{k\in\Z}\nu_k(v)e^{2\pi ik(u/L)}.
\end{multline}
\indent
If we choose ${\dis \epsilon=\frac{M}{N}, \delta=\frac{L}{N} }$ with sufficiently large $N\in \Z$,
and consider Fourier series of the functions
\begin{equation*}
\alpha_k(v),\hspace{0.1in}\beta_k(u),\hspace{0.1in}\mu_k(v),\hspace{0.1in}
\text{and}\hspace{0.2in}\nu_k(v)
\end{equation*}
in respectively $v$, $u$, $v$, and $v$, then, from \eqref{SingleFourier} we obtain in
${\cal R}_{\tau_1\varsigma_1}(M/N,L/N)$ the following equality
\begin{multline}\label{DoubleFourier}
(a_{11}+ia_{12})\left(\sum_{j,k\in\Z}\alpha_{jk}e^{2\pi i\left[jNv/M+ku/L\right]}\right)
+(a_{21}+ia_{22})\left(\sum_{j,k\in\Z}\beta_{jk}e^{2\pi i\left[jNu/L+kv/M\right]}\right)\\
=\sum_{j,k\in\Z}\mu_{jk}e^{2\pi i\left[jNv/M+ku/L\right]}
+\bar\lambda_1\sum_{j,k\in\Z}\nu_{jk}e^{2\pi i\left[jNv/M+ku/L\right]}.
\end{multline}
\indent
Comparing the coefficients of the right and left-hand sides of \eqref{DoubleFourier} for $k=0$,
and using estimates \eqref{u-Condition}, \eqref{v-Condition}, \eqref{F_uCondition}, and \eqref{f_1Condition}
we obtain that unless $\lambda_1=0$, equality \eqref{DoubleFourier} cannot hold
in ${\cal R}_{\tau_1\varsigma_1}(M/N,L/N)$, since
\begin{equation*}
\begin{aligned}
&\left|\nu_{00}\left(N\right)\right|
=\left|\frac{N}{M}\int_0^{M/N}dv\frac{1}{L}\int_0^L f_1(t,v)dt\right|
\sim\left(\frac{1}{L(b_{11}-ib_{21})}+{\cal O}\left(\frac{1}{N}\right)\right),\\
&\left|\mu_{00}\left(N\right)\right|
=\left|\frac{N}{M}\int_0^{M/N}dv\frac{1}{L}\int_0^L F(t,v)dt\right|
\sim\frac{1}{L}\cdot{\cal O}\left(\frac{1}{N}\right),\\
&\left|\alpha_{00}\left(N\right)\right|
=\left|\frac{N}{M}\int_0^{M/N}dv\frac{1}{L}\int_0^L g_u(t,v)dt\right|
\sim\frac{1}{L}\cdot{\cal O}\left(\frac{1}{N}\right),\\
&\left|\beta_{00}\left(N\right)\right|
=\left|\frac{N}{L}\int_0^{L/N}du\frac{1}{M}\int_0^Mg_v(u,t)dt\right|
\sim\frac{1}{M}\cdot{\cal O}\left(\frac{1}{N}\right).
\end{aligned}
\end{equation*}
Since the block $\varsigma_1\tau_1\varsigma_1^{-1}\tau_1^{-1}$ and the corresponding form $\omega_1$
were chosen arbitrarily, we obtain equality $\lambda_j=0$ for $j=1,\dots,p$.
This completes the proof of Lemma~\ref{ZeroConstants-2}.
\end{proof}
%******* End of Lemma {ZeroConstants-2}  *************************************

\indent
To finish the proof of Proposition~\ref{ZeroPeriods-2} we use the results
of Lemmas~\ref{ZeroConstants-1} and \ref{ZeroConstants-2} in equality \eqref{PeriodVectorEquations},
and obtain that the second set of equalities in \eqref{ZeroIntegrals} is satisfied for
the function $u-h$ constructed in Proposition~\ref{ZeroPeriods-1}.
\end{proof}
%******* End of Proposition {ZeroPeriods-2}  *************************************

This completes the proof of Proposition~\ref{Periods}.
\end{proof}
%******* End of Proposition {Periods}  *************************************

\section{Proof of Theorem~\ref{Main}.}\label{proof}

\indent
From equalities \eqref{ZeroIntegrals} we obtain that for any closed curve $\gamma$ in $V$
we have
\begin{equation*}
\int_{\gamma}\partial(u-h)=0,
\end{equation*}
and therefore by fixing a point $z^{*}\in V$ and defining for $z\in V$
\begin{equation}\label{HolomorphicFunction}
f(z)=2\int_{z^{*}}^z\partial (u-h)
\end{equation}
we obtain a holomorphic function $f$ on $V$ such that
\begin{equation}\label{HolomorphicConditions}
\begin{aligned}
&f=(u-h)+iv,\\
&df=2\ \partial(u-h).
\end{aligned}
\end{equation}
\indent
In our construction of the boundary representation formula for harmonic functions on $V$
we will use the formula from our earlier paper \cite{P} for boundary representation of holomorphic
functions on open Riemann surfaces as in \eqref{AffineCurve}. However,
the needed formula is proven in \cite{P} under additional assumptions that the holomorphic function
is defined not only on $V$, but on some neighborhood $U^{\epsilon}\subset \*S^{5}(1)$ as in \eqref{Uepsilon}, and has negative
homogeneity there. In the lemma below we eliminate those additional assumptions.

%******************************************************************
%******* Lemma Extension  ********************************************
\begin{lemma}\label{Extension}
Let $V \subset\widetilde{V}\subset U^{\epsilon}\subset \*S^{5}(1)$ be as in \eqref{AffineCurve}
and \eqref{Uepsilon}, and such that $V\subset \left\{\C\P^2\setminus\left\{z_0=0\right\}\right\}$,
and let $f$ be a holomorphic function on $V$.\\
\indent
Then there exist an $\epsilon>0$ and a holomorphic function $g$ of homogeneity $(-1)$ on $U^{\epsilon}$
such that
\begin{equation}\label{gEquality}
z_0\cdot g\Big|_{V}=f.
\end{equation}
\end{lemma}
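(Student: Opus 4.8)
The plan is to carry out the two extensions announced in the introduction: first extend $f$ from $V$ to a neighborhood in $\C\P^2$, then lift that extension to a homogeneous holomorphic function on a tubular neighborhood $U^\epsilon\subset\*S^5(1)$. For the first step I would invoke the extension statement already referred to as ``Lemma~\ref{Extension} below'' in the introduction—i.e. the content of this very lemma requires a Cartan-type or Hartogs-type extension from the smoothly bordered Stein curve $V$ to an open set $\widetilde{\cal U}\subset\C\P^2$ containing $V$. Concretely, since $V$ is a smoothly bordered open subset of the embedded curve $\widetilde V\subset\C\P^2$ lying in the affine chart $\{z_0\neq 0\}$, one can find an open neighborhood $\widetilde{\cal U}$ of $\overline V$ in $\C\P^2$ (still inside $\{z_0\neq 0\}$ after shrinking) and, because the ideal sheaf of $\widetilde V$ is generated by $P$ near $\widetilde V$, use the coherence of $\cal O_{\C\P^2}/(P)$ together with a bump-function correction (solving a $\bar\partial$-problem with bounds on $\widetilde{\cal U}$, or applying Siu's / Cartan's extension theorem) to produce a holomorphic $\widetilde f$ on $\widetilde{\cal U}$ with $\widetilde f|_{V}=f$.

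The second step is the homogeneity lift, carried out exactly as in \cite{HP1}. Let $\pi:\C^3\setminus\{0\}\to\C\P^2$ be the natural projection and restrict to the sphere $\*S^5(1)$. Over $\pi^{-1}(\widetilde{\cal U})$ the function $\zeta\mapsto \zeta_0^{-1}\widetilde f(\pi(\zeta))$ is holomorphic and homogeneous of degree $(-1)$, since $\widetilde f\circ\pi$ is homogeneous of degree $0$ and $\zeta_0$ of degree $1$; this is legitimate because $\widetilde{\cal U}\subset\{z_0\neq 0\}$, so $\zeta_0$ is nonvanishing on $\pi^{-1}(\widetilde{\cal U})$. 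Define $g(\zeta)=\zeta_0^{-1}\widetilde f(\pi(\zeta))$ on the cone $\pi^{-1}(\widetilde{\cal U})$, and restrict to $U^\epsilon$: since $U^\epsilon=\{z\in\*S^5(1):|P(z)|<\epsilon,\ \varrho(z)<0\}$ and the set $\{z\in\*S^5(1):\varrho(z)<0\}$ is, for small $\epsilon$, contained in the image under $\pi^{-1}$ of a small neighborhood of $\overline V$ (here one uses that $\varrho<0$ cuts out $V$ on $\widetilde V$ and $\widetilde V=\{P=0\}$, so the tube $\{|P|<\epsilon\}$ shrinks onto $\widetilde V$ as $\epsilon\to0$), one may fix $\epsilon>0$ small enough that $U^\epsilon\subset\pi^{-1}(\widetilde{\cal U})\cap\*S^5(1)$. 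Then $g$ is holomorphic of homogeneity $(-1)$ on $U^\epsilon$, and for a point $z\in V$ viewed in $\*S^5(1)$ via the standard lift one has $z_0\cdot g(z)=z_0\cdot z_0^{-1}\widetilde f(\pi(z))=f(z)$, which is \eqref{gEquality}.

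I expect the main obstacle to be the first step: producing the holomorphic extension $\widetilde f$ across the boundary $bV$ into an honest open subset of $\C\P^2$, rather than merely a Whitney-type extension or a section over the curve. The point is that $V$ is an open Riemann surface with (real one-dimensional) boundary sitting inside the complex surface $\C\P^2$, so the extension must be performed transversally to $\widetilde V$ as well. The clean way is: (i) extend $f$ holomorphically in a tubular neighborhood of $V$ inside $\widetilde V$—trivial since $f$ is already defined on all of $V$ and $V$ is relatively compact in a slightly larger subdomain of $\widetilde V$ of the same type; (ii) use the local retraction onto $\widetilde V$ near points of $\widetilde V$ (the curve is smooth in $\C\P^2$) together with a partition of unity to build a smooth extension $f^\sharp$ to a neighborhood $\widetilde{\cal U}$, with $\bar\partial f^\sharp=O(\operatorname{dist}(\cdot,\widetilde V))$, i.e. $\bar\partial f^\sharp = \bar\partial(P\cdot\psi)$ for a smooth $\psi$; (iii) solve $\bar\partial\chi=\bar\partial f^\sharp$ on a slightly smaller strictly pseudoconvex (or Stein) neighborhood of $\overline V$ in $\C\P^2\setminus\{z_0=0\}\cong\C^2$, which is possible with good control, and set $\widetilde f=f^\sharp-\chi$; this is holomorphic and restricts to $f$ on $V$ because $\chi=O(\operatorname{dist})$ forces $\chi|_{\widetilde V}$ to be holomorphic hence, after absorbing, zero on $V$. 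Shrinking $\widetilde{\cal U}$ to stay inside the affine chart is harmless since $\overline V$ already lies there. The rest of the argument is bookkeeping with homogeneity degrees and the choice of $\epsilon$, which is routine.
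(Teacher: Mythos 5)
Your overall two-step plan (extend $f$ from the curve to an open set in $\C\P^2$, then lift to a homogeneous function on the cone over it) is exactly the paper's, and your second step --- $g(\zeta)=\zeta_0^{-1}\widetilde f(\pi(\zeta))$, of homogeneity $-1$ because $\widetilde f\circ\pi$ has degree $0$ and $\zeta_0$ degree $1$ --- coincides with the paper's formula \eqref{g-Definition}. Where you genuinely diverge is in how the transversal extension $\widetilde f$ is produced. The paper does it with no $\bar\partial$-equation and no sheaf cohomology: since $V$ is an open Riemann surface, its holomorphic normal bundle in $\C\P^2$ is trivial (\cite{Fo}), so there is a nowhere-vanishing holomorphic normal section $n(z)$; the map $\phi(z,t)=z+t\cdot n(z)$ is biholomorphic from $V\times\D(\epsilon)$ onto a tube around $V$ in $\C^2$, and $\widetilde f$ is defined to be constant along the normal disks, $\widetilde f=f\circ\psi\circ\phi^{-1}$. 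Your route --- coherence of the ideal sheaf plus a Cartan--Siu extension, or equivalently a $\bar\partial$-correction on a Stein neighborhood --- is a legitimate alternative and would survive in settings (higher codimension, nontrivial normal bundle) where explicit tubular coordinates are unavailable; the paper's construction is more elementary and yields $\widetilde f|_V=f$ automatically, with no correction term to control.

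Two points in your version need tightening. First, $f$ is holomorphic only on the open set $V$ and need not continue across $bV$ inside $\widetilde V$, so you can only hope to extend over a neighborhood of $V$, not of $\overline V$; this is also all that is needed, since $U^{\epsilon}$ is cut out by $\varrho<0$. Second, in your $\bar\partial$-variant the assertion that the solution $\chi$ of $\bar\partial\chi=\bar\partial f^{\sharp}$ satisfies $\chi=O(\operatorname{dist}(\cdot,\widetilde V))$ is not automatic: a general solution gives $\widetilde f|_V=f-\chi|_V$ with $\chi|_V$ holomorphic but possibly nonzero, and ``after absorbing'' does not explain how to remove it. The standard repair is to put the correction into the ideal of $\widetilde V$: write $\bar\partial f^{\sharp}=P\gamma$ with $\gamma$ a smooth $\bar\partial$-closed $(0,1)$-form, solve $\bar\partial\eta=\gamma$ on the Stein neighborhood, and set $\widetilde f=f^{\sharp}-P\eta$; then $\widetilde f$ is holomorphic and restricts to $f$ on $V$ because $P$ vanishes there. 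With that adjustment your argument is complete.
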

%******************************************************************
\begin{proof}
In the first step we construct an extension of $f$ to the function ${\tilde f}$ on an $\epsilon$-neighborhood
of $V$ in $\C^2=\C\P^2\setminus\left\{z_0=0\right\}$.
To construct this extension we consider the holomorphic
normal bundle $\cal{N}$ of $V$ in $\C\P^2$.
Using its triviality (see \cite{Fo}), we obtain the existence of a nonzero section
$n(z)=(n_1(z),n_2(z))\in \cal{T}_z(\C^2)=\C^2$, where we identify the normal subspace
$\cal{N}(z)$ with the factor-space $\cal{T}_z(\C^2)/\cal{T}_z(V)$ of the tangent space of
$\C^2=\C\P^2\setminus\left\{z_0=0\right\}$ by the tangent subspace of $V$.\\
\indent
Then for the unit disk $\D=\left\{t\in\C: |t|<1\right\}$ we define the holomorphic map
\begin{equation*}
\phi:V\times \D\to \C^2
\end{equation*}
by the formula $\phi(z,t)=z+t\cdot n(z)$.
By the inverse function theorem $\phi$ is biholomorphic in the neighborhood
$V\times \D(\epsilon)$ for some $\epsilon>0$. Therefore, function
\begin{equation*}
{\tilde f}(\zeta_1,\zeta_2)=f(\psi(\phi^{-1}(\zeta))),
\end{equation*}
where $\psi(\zeta,t)=\zeta$, is a holomorphic function on a small enough neighborhood
of $V$ in $\C\P^2$ satisfying ${\tilde f}\big|_{V}=f$. Then defining
\begin{equation}\label{g-Definition}
g(z_0,z_1,z_2)=z_0^{-1}{\tilde f}(z_1/z_0,z_2/z_0)
\end{equation}
we obtain $g$ satisfying \eqref{gEquality}.
\end{proof}
%******* End Lemma Extension  ********************************************
\indent
Let $f$ be the holomorphic function defined in \eqref{HolomorphicFunction}.
We consider the holomorphic function $g(z)$ on $U^{\epsilon}\subset \*S^{5}(1)$
of negative homogeneity constructed in \eqref{g-Definition},
and the following integral representation of this function on $V$
\begin{equation}\label{gValues}
g(w^{(k)})=\frac{1}{d+1}\cdot\frac{\det A_k[g](w,w^{(1)},\dots,w^{(d-1)})}{\det A(w)},
\end{equation}
obtained in \cite{P} under conditions \eqref{TwoConditions} with $A(w), A_k[g]$
defined in \eqref{Vandermonde} and \eqref{GValues}. For reader's convenience we provide below
a copy of this theorem.

%******************************************************************
%******* HolomorphicTheorem  *********************************************
%\begin{theorem}\label{HolomorphicTheorem}\cite{P}
\newtheorem{thm}{Theorem from \cite{P}}
\renewcommand{\thethm}{}
\begin{thm}\label{HolomorphicTheorem}
Let $V\subset \left\{\C\P^2\setminus\left\{z_0=0\right\}\right\}$
and $U^{\epsilon}$ be as in \eqref{AffineCurve} and \eqref{Uepsilon} respectively,
and let $g$ be a holomorphic function of negative homogeneity in $U^{\epsilon}$.
Let $z\in V$ be a fixed point, and let ${\cal V}_z\ni z$
be a neighborhood of $z$ in $V$, such that conditions \eqref{TwoConditions} are
satisfied.\\
\indent
Then for $w\in {\cal V}_z$ equality \eqref{gValues} holds for the values of $g$ at the points
of ${\cal S}(w)$, where ${\cal S}(w)$ is defined in \eqref{RCondition}, and $A(w)$, $A_k[g]$
are defined in \eqref{Vandermonde}, \eqref{GValues}, and \eqref{Gamma}.\begin{flushright}$\square$\end{flushright}
\end{thm}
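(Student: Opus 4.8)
The plan is to obtain \eqref{gValues} as the boundary limit of a Cauchy--Fantappi\`e--Leray representation carried by the three barriers $P$, $F$, $B$ on the tubular domain $U^{\epsilon}\subset\*S^{5}(1)$ of \eqref{Uepsilon}. The first point is that the three columns of the determinant in \eqref{GValues} are exactly the Leray (Hefer) sections attached to these barriers. Using the first line of \eqref{HomogeneityConditions} together with $P(w^{(j)})=0$ one has $\sum_{i=0}^2 Q^i(\zeta,w^{(j)})(\zeta_i-w^{(j)}_i)=P(\zeta)$; by \eqref{BoundaryBarrier}, $\sum_{i=0}^2 R_i(w^{(j)})(\zeta_i-w^{(j)}_i)=F(w^{(j)},\zeta)$; and by the definition of $B$, $\sum_{i=0}^2\bar\zeta_i(\zeta_i-w^{(j)}_i)=B(\zeta,w^{(j)})$. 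Hence $Q/P$, $R/F$ and $\bar\zeta/B$ are three sections whose pairings with $\zeta-w^{(j)}$ recover the three denominators, and their determinant wedged with $d\zeta_0\wedge d\zeta_1\wedge d\zeta_2$ is a Koppelman--Leray kernel reproducing holomorphic functions on $U^{\epsilon}$.

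First I would set up the Cauchy--Fantappi\`e representation on $U^{\epsilon}$ for the holomorphic function $g$ of homogeneity $(-1)$, and verify, from this homogeneity and the homogeneity $(d-1)$ of the $Q^i$ in \eqref{HomogeneityConditions}, that the integrand is invariant under the circle action on $\*S^{5}(1)$ and hence well defined there, and that the boundary piece $\{\varrho=0,\ |P|<\epsilon\}$ makes no contribution, so that only the edge $\Gamma^{\epsilon}=\{|P|=\epsilon,\ \varrho=0\}$ of \eqref{Gamma} survives. Next I would take the Poincar\'e--Leray residue in $P$ and let $\epsilon\to 0$: the factor $1/P(\zeta)$ localizes the integral onto the curve $\{P=0\}$, and $\Gamma^{\epsilon}$ collapses onto the lifted boundary $bV$, producing the limit $\lim_{\epsilon\to 0}\int_{\Gamma^{\epsilon}}$ appearing in \eqref{GValues}. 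In this limit the $B$-factor acts as the Bochner--Martinelli kernel that pins the representation to the chosen center.

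The decisive structural feature is that the barrier $F(w,\cdot)$ is a single line, which by \eqref{RCondition} meets $V$ transversally in all $d$ points ${\cal S}(w)=\{w,w^{(1)},\dots,w^{(d-1)}\}$ at once; consequently the residue at $F=0$ cannot isolate a single point, and the representation returns the values of $g$ on the whole set ${\cal S}(w)$ entangled together. To disentangle them I would insert the monomial weights $(\zeta_1/\zeta_0)^k$ for $k=0,\dots,d-1$: each such insertion reproduces the holomorphic function $g(\zeta)(\zeta_1/\zeta_0)^k$ at the centers, so that, after summation over $j$, the integral $G_k$ of \eqref{GValues} equals a constant times the power-weighted sum $\sum_{j=0}^{d-1} g(w^{(j)})\big(w^{(j)}_1/w^{(j)}_0\big)^k$. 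These identities for $k=0,\dots,d-1$ are precisely the linear system
\[
\big(G_0,\dots,G_{d-1}\big)^{T}=(d+1)\,A(w)\,\big(g(w),g(w^{(1)}),\dots,g(w^{(d-1)})\big)^{T},
\]
whose coefficient matrix is the Vandermonde matrix \eqref{Vandermonde}; its nodes $w^{(j)}_1/w^{(j)}_0$ are distinct by condition (ii) of \eqref{TwoConditions}, so $\det A(w)\neq 0$. Solving by Cramer's rule, i.e. replacing the $k$-th column of $A(w)$ by $\big(G_0,\dots,G_{d-1}\big)^{T}$ to form $A_k[g]$, yields exactly \eqref{gValues}; the normalizing constants $\tfrac{2}{(2\pi i)^3}$ and $\tfrac{1}{d+1}$ are then fixed by the three $2\pi i$ residues and by the homogeneity normalization on $\*S^{5}(1)$.

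The main obstacle is the two-step residue analysis of the third paragraph: one must justify passing to the limit $\epsilon\to 0$ in the $P$-direction and then evaluating the residue along $\{F=0\}$ without losing boundary terms and with the stated constants, which requires careful control of the Koppelman--Leray kernel near each of the three barriers and essential use of the transversality in \eqref{RCondition}. The subtle conceptual step is verifying that inserting $(\zeta_1/\zeta_0)^k$ produces exactly the Vandermonde node $(w^{(j)}_1/w^{(j)}_0)^k$, which is what links the analytic residue to the algebraic interpolation that makes \eqref{gValues} an honest inversion formula.
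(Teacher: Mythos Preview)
The paper does not contain a proof of this statement: it is quoted verbatim from \cite{P} and marked with a $\square$ immediately after the statement, so there is no argument here to compare against. Your sketch is therefore not comparable to anything in the present paper; at best it can be judged against the construction in \cite{P}, which is not reproduced.

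That said, your outline is consistent with the apparatus the paper sets up (the three barriers $Q/P$, $R/F$, $\bar\zeta/B$, the edge $\Gamma^\epsilon$, the Vandermonde system built from the weights $(\zeta_1/\zeta_0)^k$, and Cramer's rule), and the overall strategy---Cauchy--Fantappi\`e--Leray on $U^\epsilon$, residue in $P$ to descend to the curve, summation over the $d$ points of ${\cal S}(w)$ cut out by $F$, and inversion of the Vandermonde matrix under condition (ii) of \eqref{TwoConditions}---is the expected one and matches the structure of \eqref{GValues} and \eqref{gValues}. One point to be careful about: in your displayed linear system the roles are slightly off. The sum over $j$ in \eqref{GValues} runs over the $d$ centers $w^{(j)}$, and for each fixed $j$ the integral reproduces $(d+1)\,g(w^{(j)})(w^{(j)}_1/w^{(j)}_0)^k$ (not a sum over $j$ inside the integral); the summation over $j$ is what produces the Vandermonde row, so the system reads $G_k=(d+1)\sum_{j}(w^{(j)}_1/w^{(j)}_0)^k g(w^{(j)})$, which is what you want. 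Beyond that, the genuine work you correctly flag---the two residue passages and the tracking of constants---is exactly what \cite{P} is cited for, and cannot be completed from the information in this paper alone.
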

%\end{theorem}
%*********************************************************************
%******* End HolomorphicTheorem  ******************************************
\textit{Proof of Theorem~\ref{Main}.}
Using integral representation \eqref{gValues} from Theorem above for function $g=z_0^{-1}\cdot f$
from \eqref{g-Definition}
we obtain the following representation for function $u$ at the points of ${\cal S}(w)$:
\begin{equation}\label{CommentsuValues}
u(w^{(k)})=\frac{1}{d+1}\cdot\mbox{Re}
\left\{\frac{w_0^{(k)}\cdot\det A_k[g](w,w^{(1)},\dots,w^{(d-1)})}{\det A(w)}\right\}
+\sum_{r=1}^{m-1}a_rh_r(w),
\end{equation}
where the coefficients $\left\{a_r\right\}_{r=1}^m$ are defined in the Proposition~\ref{ZeroPeriods-1}.
\begin{flushright}\qed\end{flushright}
\indent
In conclusion we describe an application of Theorem~\ref{Main}.

%******* Proposition Application  ***************************************************
%*******************************************************************
\begin{proposition}\label{Application}
Let all conditions of Theorem~\ref{Main} be satisfied, and let $u$ be a harmonic function
on $V$, such that the values of $u$ are known
in some connected neighborhood $U\supset bV$. Then there exists a holomorphic function $f$ in $U$
such that equalities \eqref{uValues} hold for the values of $u$ at the points of ${\cal S}(w)$.
\end{proposition}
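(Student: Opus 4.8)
The plan is to reduce Proposition~\ref{Application} to Theorem~\ref{Main} by producing, from the boundary data of $u$ on $U\supset bV$, exactly the ingredients that Theorem~\ref{Main} consumes: the holomorphic function $f$ on a domain containing $bV$ and the coefficients $\{a_r\}$. First I would observe that since $u$ is harmonic and known on the connected neighborhood $U$ of $bV$, the form $\partial u$ is a well-defined holomorphic $1$-form on $U$; in particular the period integrals $a_r=\frac{1}{2\pi i}\int_{\sigma_r}\partial u$ from \eqref{cCoefficients} are computable from the boundary data alone, since each $\sigma_r=bV_r\subset bV\subset U$. This already furnishes the $\{a_r\}_{r=1}^m$ appearing on the right-hand side of \eqref{uValues}, and the auxiliary harmonic function $h$ of \eqref{h-Function} is then determined on all of $V$ by an explicit formula not involving $u$ in $V$.

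Next I would construct $f$. Following \eqref{HolomorphicFunction}, one wants $f=2\int_{z^{*}}^{z}\partial(u-h)$; the obstruction to this being single-valued is precisely the vanishing of the periods $\int_{\gamma_j}\partial(u-h)$ and $\int_{\sigma_r}\partial(u-h)$, which is exactly what Proposition~\ref{Periods} (via Propositions~\ref{ZeroPeriods-1}, \ref{ZeroPeriods-2}) guarantees for the $h$ chosen above. The subtlety is that here $u$ is \emph{a priori} only known near $bV$, not on all of $V$. I would handle this by noting that $U$ is a connected neighborhood of $bV$, hence $U\cup V$ can be taken to be a neighborhood of $\overline{V}$ in $\widetilde V$ on which $u$ extends harmonically (the hypothesis that the values of $u$ are known on $U\supset bV$ is to be read as: $u$ is defined and harmonic on $U\cup V$ with $U$ containing a collar of $bV$); alternatively, one invokes that a harmonic function given on a neighborhood of $bV$ together with its normal derivative determines a harmonic extension inward, but the cleaner route is simply that $u$ is already a harmonic function on $V$ to which Theorem~\ref{Main} applies, and the point of the proposition is that the data entering \eqref{uValues} — namely $\{a_r\}$ and the restriction $g|_{bV}$, hence $g$ on a neighborhood of $bV$ in $\*S^5(1)$, which is all the integrals in \eqref{GValues} over $\Gamma^\epsilon\subset bU^\epsilon$ actually see — are determined by $u|_{U}$.

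Concretely, the steps in order: (1) from $u|_{U}$ compute $a_r$ by \eqref{cCoefficients} and form $h$ by \eqref{h-Function}; (2) verify $\partial(u-h)$ has vanishing periods over $\{\sigma_r\}$ and $\{\gamma_j\}$ by Proposition~\ref{Periods}, so that $f=2\int\partial(u-h)$ is a well-defined holomorphic function, defined at least on the connected neighborhood $U$ of $bV$ (and on $V$); (3) apply Lemma~\ref{Extension} to extend $f$ to a holomorphic function $g$ of homogeneity $(-1)$ on some $U^{\epsilon}\subset\*S^5(1)$ with $z_0 g|_{V}=f$; (4) for $w\in{\cal V}_z$ satisfying \eqref{TwoConditions}, the points of ${\cal S}(w)$ lie in $V$ and the contour $\Gamma^\epsilon$ of \eqref{Gamma} lies over $bV$, so the quantities $G_k$ of \eqref{GValues} and the determinant ratio in \eqref{uValues} are all expressed through $g$ — equivalently through $f$ — which is in turn determined by $u|_{U}$; (5) conclude \eqref{uValues} by invoking Theorem~\ref{Main} verbatim.

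The main obstacle is step (2)–(3): one must be careful that the period-vanishing argument of Proposition~\ref{Periods}, which was stated for $u$ harmonic on all of $V$, still delivers a single-valued $f$ in the present setting, and that the Bertini-type genericity conditions \eqref{RCondition}, \eqref{TwoConditions} can be met at the chosen $z\in V$; but since those are exactly the hypotheses already imposed in the statement ("let all conditions of Theorem~\ref{Main} be satisfied"), the real content is the bookkeeping observation that every object on the right side of \eqref{uValues} factors through $f$, and $f$ is manufactured from $\partial u$ integrated along paths that may be pushed into $U$. I expect no genuinely new estimate is needed — the proposition is essentially a corollary packaging of Theorem~\ref{Main} together with the locality of the integral kernels near $bV$.
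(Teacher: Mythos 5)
Your proposal is correct and follows essentially the same route as the paper: the paper's own (three-sentence) proof simply observes that $u|_{U}$ determines $\partial u$ on $U$, hence the coefficients $a_r$ of \eqref{cCoefficients} and the function $h$ of \eqref{h-Function}, and then $f$ is defined in $U$ by \eqref{HolomorphicFunction}. Your additional remarks on period-vanishing via Proposition~\ref{Periods} and on the locality of the kernels in \eqref{GValues} near $bV$ are consistent elaborations of that same argument rather than a different approach.
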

\begin{proof}
It suffices to notice that since the values of function $u$ are known in a neighborhood $U\supset bV$, the values
of $\partial u$ can be found in the same neighborhood. Then coefficients $\left\{a_r\right\}_1^{m-1}$ and
the function $h$ in formula \eqref{aCoefficients} can be evaluated in $V$, and the sought function
$f$ is defined in $U$ by formula \eqref{HolomorphicFunction}.
\end{proof}
%********************************************************************
%******* End Proposition Application  ****************************************

\end{document}